\newtheorem{thm}{Theorem}
\newtheorem{lem}[thm]{Lemma}
\newtheorem{prop}[thm]{Proposition}
\newtheorem{prob}{Problem}
\newcommand{\be}{\begin{equation}}
\newcommand{\ee}{\end{equation}}
\newcommand{\la} {\langle}
\newcommand{\ra} {\rangle}
\newcommand{\vf}{\varphi}
\newcommand{\s}{\sigma}
\newcommand{\bt}{\boxtimes}
\renewcommand{\r}{\rho}
\renewcommand{\le}{\leqslant}
\renewcommand{\t}{\tau}
\newcommand{\g}{\gamma}
\newcommand{\rd}{\rightthreetimes}
\newcommand{\nor}{\trianglelefteqslant}
\newcommand{\wt}{\widetilde}
\newcommand{\E}{\mathcal{E}}
\newcommand{\C}{\mathrm{C}}
\newcommand{\M}{\mathcal{M}}
\title{Abelian-by-cyclic Moufang loops}
\author{Alexander N. Grishkov and Andrei V. Zavarnitsine}
\thanks{Supported by FAPESP, Brazil (proc. 2010/51793-2);
by the Russian Foundation for Basic Research
(projects 10--01--90007, 11--01--00456);
by the Council of the President grants (project NSc--3669.2010.1);
by the Program ``Development of the Scientific Potential of Higher School''
(project 2.1.1.10726); by the Russian Federal Program ``Scientific and pedagogic people of the innovative Russia'' (contract 14.740.11.0346).}
\date{}
\begin{document}
\begin{abstract} We use groups with triality to construct a series of nonassociative Moufang loops.
Certain members of this series contain an abelian normal subloop with the corresponding quotient
being a cyclic group. In particular, we give a new series of examples of
finite abelian-by-cyclic Moufang loops. The previously known \cite{raj}
loops of this type of odd order $3q^3$, with prime $q\equiv 1\pmod 3$, are particular cases of our series. Some of the examples are shown to
be embeddable into a Cayley algebra.

{\sc Keywords:} Moufang loops, groups with triality

{\sc MSC2000:} 08A05, 20E34, 20N05

% 512.54 % UDC
\end{abstract}
\maketitle

\section{Introduction}

Universal constructions for new Moufang loops are few. An example is Chein's
doubling process \cite{che} which allows one, given an arbitrary nonabelian group $G$,
to obtain a nonassociative Moufang loop of cardinality $2|G|$. Since the discovery
of the relation between groups with triality and Moufang loops, which has been used successfully by
various authors \cite{lie,gz,gag} to solve important problems in the theory Moufang loops,
there appeared new ways of constructing Moufang loops using groups with triality.

In the present paper, we build a new series of groups with triality and then derive an explicit
multiplication formula for the corresponding Moufang loops. In particular, we obtain
a series of {\em abelian-by-cyclic} Moufang loops (i.\,e. an  upward extension of an abelian group by a cyclic
group). To be more precise, let $R$ be an arbitrary associative commutative unital ring and
let $R_0$ be a cyclic group of
invertible elements of $R$. We show that the set of tuples $(r,x,y,z)$, where $r\in R_0$, $x,y,r\in R$,
with the multiplication
\be\label{abm}
\begin{array}{r@{}l}
(r_1,x_1,& y_1,z_1)(r_2,x_2,y_2,z_2)=\\[5pt]
(r_1&r_2,x_1+r_1x_2,y_1+r_1y_2,r_2z_1+z_2+(1-2r_1^{-1}r_2)x_1y_2-x_2y_1)
\end{array}
\ee
is an abelian-by-cyclic nonassociative Moufang loop of the form $R_0.(R+R+R)$ provided that
either $R_0$ has order $3$ or $R$ has characteristic $2$.

The minimal finite loops of this type clearly arise if $R_0$ has prime order $p$ and $R$
is a minimal finite field with an element of multiplicative order $p$. For example,
this gives abelian-by-cyclic proper Moufang
loops of orders $3.2^6$, $7.2^9$, $3.5^6$, $3.7^3$, etc.

The abelian-by-cyclic Moufang loops are of interest in light of the following problem proposed by
M.\,Kinyon and based on \cite{chr}:
\begin{prob} Let $M$ be a Moufang loop with a normal abelian subgroup (i.\,e. associative subloop)
$N$ of odd order such that $M/N$ is a cyclic group of order bigger than $3$.
\begin{enumerate}
\renewcommand{\theenumi}{\roman{enumi}}
\item Is $M$ a group?
\item If the orders of $N$ and $M/N$ are coprime, is $M$ a group?
\end{enumerate}
\end{prob}
Although the finite loops of the form (\ref{abm}) are not counterexamples to this problem, there are reasons
to believe that they are essentially the only types of abelian-by-cyclic Moufang loops such that the orders
of $N$ and $M/N$ are coprime.

Examples of abelian-by-cyclic Moufang loops of odd order $3q^3$ with prime $q\equiv 1\pmod 3$
have also appeared
in \cite{raj}, where the problem of the existence of nonassociative Moufang loops
of orders $pq^3$, with $p,q$ prime, was
considered. Due to the uniqueness result of \cite{raj}, these examples must be particular cases of
our series (\ref{abm}). However, we have not attempted to find an explicit isomorphism.

The loops (\ref{abm}) are constructed as particular cases of a wider class of Moufang
loops $M_{a,b}$, $a,b\in R$, not all of which are abelian-by-cyclic but all have the general
structure $R_0.(R+R).R$ for a given subgroup $R_0\le R^\times$, see Lemma \ref{mult}.
We show that some members of this series can be embedded in the Cayley algebra $\mathbb{O}(R)$.
In the last section, we raise the isomorphy problem for the loops $M_{a,b}$ and prove one relevant result.

\section{Preliminaries}

A loop $M$ is called a {\it Moufang loop} if $xy\cdot zx = (x\cdot yz)x$ for all $x,y,z \in M$.
For basic properties of Moufang loops, see \cite{bru}.

A group $G$ possessing automorphisms $\rho$ and $\sigma$ that satisfy $\rho^3=\sigma^2=(\rho\sigma)^2=1$ is
called a {\it group with triality $\langle\rho,\sigma\rangle$} if
$$
(x^{-1}x^{\sigma})(x^{-1}x^{\sigma})^\rho(x^{-1}x^{\sigma})^{\rho^2}=1
$$
for every $x$ in $G$. In a group $G$ with triality $S=\langle\rho,\sigma\rangle$,
the set $\M(G)=\{ x^{-1}x^{\sigma}\ |\ x\in G\}$
is a Moufang loop with respect to the multiplication
\begin{equation} \label{loop_mult}
m.n=m^{-\rho} n  m^{-\rho^2}
\end{equation}
for all $n,m\in \M(G)$. Conversely, every Moufang loops arises so from a suitable group with triality.
For more information on the relation between groups with triality and Moufang loops, see \cite{gz_tri}.

Every group with triality $G$ possesses a (necessarily unique) maximal normal subgroup contained in ${\rm C}_{G}(S)$
which we will denote by ${\rm Z}_{S}(G)$. For every Moufang loop $M$ there exists a unique group with triality $\E(M)$
that satisfies both ${\rm Z}_S(\E(M))=1$ and $[\E(M),S]=\E(M)$.

A homomorphism $\varphi: G_1\to G_2$ of groups $G_1$ and $G_2$ with triality $S$ is called an {\it
$S$-homo\-morphism} if $\alpha\varphi=\varphi\alpha$ for all $\alpha \in S$. The following result is a consequence of
\cite[p. 383--384]{dor}.
% To be more precise: Discussion on p. 383 before the construction
% of the group $G(S)$, and Corollary 1 on p.384

\begin{lem} \label{mmini} Moufang loops $M_1$ and $M_2$ are isomorphic if and only if $\E(M_1)$ and $\E(M_2)$ are $S$-isomorphic.
\end{lem}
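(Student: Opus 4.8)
The plan is to treat $\M$ and $\E$ as mutually inverse constructions between Moufang loops and those groups with triality $G$ satisfying $\Z_S(G)=1$ and $[G,S]=G$, and to read off both implications from this correspondence together with the uniqueness assertion quoted just above the statement. Two ingredients underlie everything: first, that an $S$-homomorphism induces a homomorphism of the associated Moufang loops (functoriality of $\M$); and second, the natural identification $\M(\E(M))\cong M$ that comes with the construction of $\E(M)$.

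For the functoriality of $\M$, I would let $\varphi\colon G_1\to G_2$ be an $S$-homomorphism. Since $\varphi$ commutes with $\sigma$, any element $x^{-1}x^\sigma$ of $\M(G_1)$ is sent to $\varphi(x)^{-1}\varphi(x)^\sigma\in\M(G_2)$, so $\varphi$ restricts to a map $\M(G_1)\to\M(G_2)$; and because $\varphi$ also commutes with $\rho$, the formula \eqref{loop_mult} gives $\varphi(m.n)=\varphi(m)^{-\rho}\varphi(n)\varphi(m)^{-\rho^2}=\varphi(m).\varphi(n)$, so this restriction is a loop homomorphism. If $\varphi$ is an $S$-isomorphism then $\varphi^{-1}$ is again an $S$-homomorphism, so the induced loop maps are mutually inverse. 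This yields the ``if'' direction at once: an $S$-isomorphism $\E(M_1)\to\E(M_2)$ induces a loop isomorphism $\M(\E(M_1))\to\M(\E(M_2))$, and composing with the natural isomorphisms $M_i\cong\M(\E(M_i))$ gives $M_1\cong M_2$.

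For the ``only if'' direction, let $\th\colon M_1\to M_2$ be a loop isomorphism. I would regard $\E(M_1)$ as a group with triality in its own right: it satisfies $\Z_S(\E(M_1))=1$ and $[\E(M_1),S]=\E(M_1)$ by definition, and its associated loop $\M(\E(M_1))\cong M_1$ is, via $\th$, isomorphic to $M_2$. Thus $\E(M_1)$ is a group with triality having trivial $\Z_S$, equal to its own commutator with $S$, and with associated loop isomorphic to $M_2$; by the uniqueness of such a group, namely $\E(M_2)$, I conclude that $\E(M_1)$ and $\E(M_2)$ are $S$-isomorphic.

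The crux, and the place where \cite[pp.~383--384]{dor} is really used, is the uniqueness invoked in the last step: one must know that the conditions $\Z_S(G)=1$, $[G,S]=G$ pin down the group with triality attached to a given Moufang loop up to $S$-isomorphism, equivalently that $\E$ and $\M$ are mutually inverse on the relevant classes. I expect that verifying (or correctly citing) this uniqueness, rather than the routine functoriality of $\M$, is the main obstacle; the alternative of building a functorial lift of $\th$ to an $S$-homomorphism $\E(M_1)\to\E(M_2)$ directly would require the universal property of $\E(M)$ and amounts to essentially the same work.
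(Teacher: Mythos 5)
Your proposal is correct, and it matches the paper's treatment: the paper gives no proof of this lemma, simply citing Doro (pp.~383--384) for exactly the uniqueness/correspondence you identify as the crux, namely that the conditions $\Z_S(G)=1$ and $[G,S]=G$ determine the group with triality attached to a Moufang loop up to $S$-isomorphism. Your two routine ingredients (functoriality of $\M$ along $S$-homomorphisms via the multiplication (\ref{loop_mult}), and the identification $\M(\E(M))\cong M$) plus that cited uniqueness give both directions as you describe.
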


\section{Triality representations}

\begin{lem} \label{lm_id} Let $M$ be a Moufang loop. Then, for every $x,y,m\in M$, we have
$$m^{-1}(mx.y)=xm^{-1}.my=(x.ym^{-1}).m$$
\end{lem}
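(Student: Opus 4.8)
The plan is to reduce each of the two equalities to one of the classical one-sided Moufang identities, after recalling the standard structural facts about Moufang loops. Specifically, I would first recall from \cite{bru} that $M$ has the inverse property ($a^{-1}(ab)=b$ and $(ba)a^{-1}=b$ for all $a,b$), is flexible, is diassociative (any two elements generate an associative subloop, by Moufang's theorem), and satisfies the left and right Moufang identities
$$u(v(uw))=((uv)u)w, \qquad ((wu)v)u=w(u(vu)).$$
These are the only ingredients; the argument is then purely a matter of choosing the right substitutions and then simplifying the resulting ``pure power'' subexpressions inside the two-generated subgroups they live in.

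For the first equality $m^{-1}(mx\cdot y)=xm^{-1}\cdot my$, I would apply the left Moufang identity with $u=m$, $v=xm^{-1}$, $w=y$. Its right-hand side becomes $((m(xm^{-1}))m)y$; since $m,x,m^{-1}$ all lie in the associative subloop $\langle m,x\rangle$, this collapses to $(mx)y=mx\cdot y$. The left-hand side is $m((xm^{-1})(my))$, so the identity reads $m((xm^{-1})(my))=mx\cdot y$, and cancelling $m$ on the left via the inverse property gives $(xm^{-1})(my)=m^{-1}(mx\cdot y)$, as required.

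For the second equality $xm^{-1}\cdot my=(x\cdot ym^{-1})m$, I would dualize and apply the right Moufang identity with $u=m^{-1}$, $w=x$, $v=my$. Its left-hand side is $((xm^{-1})(my))m^{-1}$, while its right-hand side is $x(m^{-1}((my)m^{-1}))$; working inside $\langle m,y\rangle$ the inner factor simplifies to $ym^{-1}$, so the identity becomes $((xm^{-1})(my))m^{-1}=x(ym^{-1})$. Multiplying on the right by $m$ and using the inverse property then yields $(xm^{-1})(my)=(x(ym^{-1}))m=(x\cdot ym^{-1})m$, which together with the first step gives the full chain of three equalities.

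The computation is short, so the genuine difficulty is not volume but precision: one must track the parenthesizations exactly and be sure that each ``obvious'' cancellation — for instance $(m(xm^{-1}))m=mx$ or $m^{-1}((my)m^{-1})=ym^{-1}$ — is carried out legitimately inside a \emph{two-generated} subloop, where diassociativity guarantees ordinary group arithmetic. Keeping the dummy variables of the one-sided Moufang identities disjoint from the loop elements during the substitution is the main place where a slip could occur; once the substitutions are fixed, everything else is forced.
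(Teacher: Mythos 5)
Your proof is correct and follows exactly the route the paper intends: its entire proof is the one-line remark that the identities follow from the left and right Moufang identities, and your substitutions (left Moufang with $u=m$, $v=xm^{-1}$, $w=y$; right Moufang with $u=m^{-1}$, $w=x$, $v=my$), combined with diassociativity and the inverse property, are a legitimate filling-in of that outline. No issues.
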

\begin{proof} This follows
from the left and right Moufang identities.
\end{proof}

\begin{lem} \label{sm_com} Let $G$ be a group with triality $S=\la \s,\r\ra$. Then, for every
$m\in M=\M(G)$, $G$ is a group with triality $S_{(m)}=\la\s,\r^2 m \r^2\ra$ which we denote by $G_{(m)}$.
The Moufang loop $M_{(m)}=\M(G_{(m)})$ has multiplication
\be\label{is_mul}
x*_{(m)}y =(x.m^{-1}).(m.y)
\ee
for all $x,y\in M_{(m)}$. In particular, $M_{(m)}$ is isotopic to $M$ and, conversely, every loop-isotope
of $M$ has the form $M_{(m)}$ for some $m\in M$.
\end{lem}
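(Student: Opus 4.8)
The plan is to realize the new triality inside the holomorph $\wh G=G\rtimes S$ and to reduce every assertion to the triality identity together with the fact that $m^{\s}=m^{-1}$ for every $m\in M$ (this last fact is immediate: $(x^{-1}x^{\s})^{\s}=(x^{\s})^{-1}x=(x^{-1}x^{\s})^{-1}$). First I would observe that, viewed in $\wh G$, the element $\tilde\r:=\r^2 m\r^2$ projects onto $\r\in S$, so it has the form $g\r$ with $g\in G$ obtained from $m$ by applying a power of $\r$. In particular $\tilde\r$ is a legitimate candidate to replace $\r$ as the order-three generator, while $\s$ is kept unchanged.

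The first task is to check that $S_{(m)}=\la\s,\tilde\r\ra$ is again a copy of $S_3$ acting as triality automorphisms. I would compute $\tilde\r^{3}$ and $(\s\tilde\r)^{2}$ directly in $\wh G$: after collecting the $S$-parts (which match automatically) the $G$-parts collapse to $1$ precisely by the triality identity $m\cdot m^{\r}\cdot m^{\r^{2}}=1$ in its cyclic forms and by $m^{\s}=m^{-1}$. This simultaneously yields $\tilde\r^{3}=\s^{2}=(\s\tilde\r)^{2}=1$, so $S_{(m)}\cong S_{3}$, and that conjugation by $\s$ and $\tilde\r$ gives an $S_{3}$-action on $G$. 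To see that $(G,S_{(m)})$ is a group with triality, note that since $\s$ is unchanged the underlying set $\M(G_{(m)})=\{x^{-1}x^{\s}\mid x\in G\}=M$ is literally the same as $M$; the triality identity for $S_{(m)}$ then reads $n\cdot n^{\tilde\r}\cdot n^{\tilde\r^{2}}=1$ for all $n\in M$, and expanding $n^{\tilde\r},n^{\tilde\r^{2}}$ as conjugates by powers of $m$ reduces this once more to the original identity.

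The computational heart is formula (\ref{is_mul}). By (\ref{loop_mult}) applied to the new triality, the product is $x*_{(m)}y=x^{-\tilde\r}\,y\,x^{-\tilde\r^{2}}$, where $x^{\tilde\r}$ and $x^{\tilde\r^{2}}$ are the $\tilde\r$-conjugates computed above, i.e. words in $x^{\pm\r^{i}}$ and $m^{\pm\r^{j}}$. I would substitute these into the product and then simplify, using the relations $m\,m^{\r}\,m^{\r^{2}}=1$ (in all cyclic forms) and Lemma \ref{lm_id}, reorganising the resulting word and re-expressing it via (\ref{loop_mult}) as $(x.m^{-1}).(m.y)$. This bookkeeping is the main obstacle: both sides are long words in $G$, so matching them requires applying the triality relations repeatedly and in the right order, and getting the holomorph/action conventions exactly right is essential to avoid sign- and order-errors.

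Finally, isotopy with $M$ is immediate from (\ref{is_mul}): since $M$ has the inverse property, $x.m^{-1}=x/m=R_{m}^{-1}(x)$ and $m.y=m^{-1}\backslash y=L_{m^{-1}}^{-1}(y)$, so the identity map realises the isotopy $(R_{m}^{-1},L_{m^{-1}}^{-1},\mathrm{id})\colon M_{(m)}\to M$. For the converse I would invoke the standard fact that every loop-isotope of $M$ is isomorphic to a principal isotope $x\circ y=(x/u)(v\backslash y)$, and then use the inverse and Moufang properties (equivalently Lemma \ref{lm_id}) to show that each such principal isotope is isomorphic to one with $v=u^{-1}$, that is, to $M_{(u)}$. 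I expect this last reduction, rather than the verification of (\ref{is_mul}), to be the only point where an external loop-theoretic result is needed.
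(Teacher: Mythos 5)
Your plan follows the paper's proof essentially step for step: verify $(\r^2m\r^2)^3=(\s\r^2m\r^2)^2=1$ from the triality identity and $m^\s=m^{-1}$, check the triality identity for $S_{(m)}$ only on $M=\M(G)$ (which is unchanged as a set since $\s$ is unchanged), obtain (\ref{is_mul}) by expanding the $\r^2m\r^2$-conjugates and simplifying via Lemma \ref{lm_id}, and appeal to Bruck's Lemma VII.5.8 for the converse isotopy statement. The only minor imprecision is your claim that the identity $nn^{\r^2m\r^2}n^{(\r^2m\r^2)^2}=1$ ``reduces once more to the original identity'': the actual reduction also uses the loop multiplication (\ref{loop_mult}) to recognize $n^{\r}m^{-1}n^{\r^2}=n^{-1}.m^{-1}$ and the inverse property $(n^{-1}.m^{-1}).m=n^{-1}$ coming from Lemma \ref{lm_id}, exactly as in the displayed computation in the paper.
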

\begin{proof} By the triality identity we have
\begin{gather*}
(\r^2 m \r^2)^3=\r^2 m \r m \r m \r^2=\r m^{\r^2} m^\r m \r^2=\r^3=1,\\
(\s\r^2 m \r^2)^2=\r m^{-1} \r\r^2 m \r^2=1.
\end{gather*}
Hence, $S_{(m)}$ is an $S_3$-complement for $G$ in $SG$. Note that $M_{(m)}$ coincides with $M=[G,\s]$
as a set. For every $n\in M_{(m)}$, we have

\begin{align*}
nn^{\r^2m\r^2}&n^{(\r^2m\r^2)^2}=n(m^{-1}n^{\r^2}m)^{\r^2}n^{\r m^{-1}\r}=\\
&nm^{-\r^2}n^\r m^{\r^2}(mn^\r m^{-1})^\r=nm^{-\r^2}n^\r(m^{\r^2}m^\r)n^{\r^2}m^{-\r}=\\
&nm^{-\r^2}(n^\r m^{-1}n^{\r^2})m^{-\r}=nm^{-\r^2}(n^{-1}.m^{-1})m^{-\r}=\\
&n((n^{-1}.m^{-1}).m)=nn^{-1}=1
\end{align*}

Hence, $G_{(m)}$ is indeed a group with triality $S_{(m)}$. The multiplication formula in $M_{(m)}$ is then given by
\begin{align*}
x*_{(m)}y =& x^{-\r^2m\r^2}yx^{\r m^{-1}\r} = (m^{-1}x^{-\r^2}m)^{\r^2}y(mx^{-\r}m^{-1})^\r=\\
&m^{-\r^2}x^{-\r}(m^{\r^2}ym^\r) x^{-\r^2}m^{-\r}=m^{-\r^2}(x^{-\r}(y.m^{-1}) x^{-\r^2})m^{-\r}=\\
&m^{-\r^2}(x.(y.m^{-1}))m^{-\r}=(x.(y.m^{-1})).m=(x.m^{-1}).(m.y)
\end{align*}
where the last equality holds by Lemma \ref{lm_id}.

By \cite[Lemma VII.5.8]{bru} every loop-isotope of a Moufang loop is isomorphic to a
principal isotope with multiplication of the form (\ref{is_mul}).
\end{proof}

Let $R$ be a commutative ring and  $S=\la \s,\r\ra$. A right $RS$-module $V$ is
called a {\it triality module} ({\it for $S$}) if $V$ is a group with triality $S$. This holds if and only if
$(\s-1)(1+\r+\r^2)$ annihilates $V$. The classification
of the indecomposable triality $RS$-modules over fields is given in \cite[Lemma 5]{syl}.

Let $G$ be a group with triality $S$. A right $R[S\rd G]$-module $V$ is
called a {\it triality module} ({\it for $G$}) if the natural semidirect product $G\rd V$ is a group with triality $S$.

\begin{lem} \label{tr_kr} An $R[S\rd G]$-module $V$ is a triality module for $G$ if and only if
the restriction $V\mid_{S_{(m)}}$ is a triality module for $S_{(m)}$ for every $m\in \M(G)$.
\end{lem}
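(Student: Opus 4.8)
The plan is to rewrite both sides of the asserted equivalence as annihilation conditions in the group ring $R[S\rd G]$ and then match them. By definition $V$ is a triality module for $G$ exactly when $G\rd V$ is a group with triality $S$, so I must verify the triality identity on every $x=(g,v)\in G\rd V$. Using the product $(g_1,v_1)(g_2,v_2)=(g_1g_2,\,v_1^{g_2}+v_2)$ one computes $x^{-1}x^\s=(n,w)$ with $n=g^{-1}g^\s\in\M(G)$ and $w=v(\s-n)\in V$. Expanding $(n,w)(n,w)^\r(n,w)^{\r^2}$ and discarding the $G$-component via the triality identity $n\,n^\r n^{\r^2}=1$ of $G$, the identity for $x$ collapses to $w\,(n^{-1}+\r n^{\r^2}+\r^2)=0$; letting $g,v$ vary then shows that the left-hand condition is equivalent to $VA_n=0$ for all $n\in\M(G)$, where $A_n:=(\s-n)(n^{-1}+\r n^{\r^2}+\r^2)$. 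Dually, by the annihilator criterion recalled above, $V\!\mid_{S_{(m)}}$ is a triality module for $S_{(m)}=\la\s,\tau_m\ra$, $\tau_m=\r^2 m\r^2$, exactly when $VB_m=0$, where $B_m:=(\s-1)(1+\tau_m+\tau_m^2)$.

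The forward implication is then a one-line consequence of Lemma \ref{sm_com}: if $G\rd V$ has triality $S$ it also has triality $S_{(m)}$ for every $m\in\M(G)\se\M(G\rd V)$, and specializing that identity to the $S_{(m)}$-invariant subgroup $V$ produces precisely $VB_m=0$.

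For the reverse implication --- and in fact to recover both directions uniformly --- I would prove the sharp statement that, for each fixed $n\in\M(G)$, the condition $VA_n=0$ is equivalent to $VB_{n^{-1}}=0$; since $n\mapsto n^{-1}$ permutes $\M(G)$, this is the lemma. The engine is a simplification of $A_n$ using the two structural relations of loop elements, $n^\s=n^{-1}$ and $(n\r^2)^3=1$. Setting $\xi=n^{-1}\r$ (of order $3$), one checks $n^{-1}+\r n^{\r^2}+\r^2=(1+\xi+\xi^2)\r^2$ and $n(1+\xi+\xi^2)=\r(1+\xi+\xi^2)$, whence (discarding the units $\r^2$ and $\s$)
\[
VA_n=0\iff V(\s-\r)(1+\xi+\xi^2)=0\iff V(\s\r-1)(1+\xi+\xi^2)=0.
\]
Since $\s\r$ is an involution with $(\s\r)\xi(\s\r)^{-1}=\xi^{-1}$, the last line says exactly that $V$ is a triality module for the $S_3$-subgroup $\la\s\r,\xi\ra$. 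Finally $\la\s\r,\xi\ra=\r\,S_{(n^{-1})}\,\r^{-1}$, because $\r\s\r^{-1}=\s\r$ and $\r\tau_{n^{-1}}\r^{-1}=n^{-1}\r=\xi$; and because $\r$ acts on $V$ as a unit of $R[S\rd G]$, conjugating an $S_3$-subgroup by $\r$ does not change whether $V$ is a triality module for it. Hence $VA_n=0\iff VB_{n^{-1}}=0$.

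The main obstacle is precisely the group-ring bookkeeping compressed into the display: turning the raw factor $(\s-n)(n^{-1}+\r n^{\r^2}+\r^2)$ into $(\s\r-1)(1+\xi+\xi^2)$. This is elementary, using only the Coxeter relations $\s\r\s=\r^{-1}$ together with $n^\s=n^{-1}$ and $n\,n^\r n^{\r^2}=1$, but the two key identities $n(1+\xi+\xi^2)=\r(1+\xi+\xi^2)$ and $(\s\r)\xi(\s\r)^{-1}=\xi^{-1}$ must be verified with care: the whole argument hinges on recognizing the correct $S_3$-subgroup $\la\s\r,\xi\ra$ and, crucially, that it is a $\r$-conjugate of $S_{(n^{-1})}$ rather than of $S_{(n)}$.
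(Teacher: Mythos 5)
Your argument is correct and follows essentially the same route as the paper: the forward direction via Lemma \ref{sm_com}, and the converse by reducing the triality identity for $x=gv$ to the annihilation of $(\s-m)(m^{-1}+\r m^{\r^2}+\r^2)$ and recognizing this operator as a unit-conjugate of $(\s-1)(1+\tau_{m^{-1}}+\tau_{m^{-1}}^{2})$. The paper reaches the same identity by a direct regrouping, writing the operator as $\r^{2}(\s-1)(\r^{2}m^{-1}\r^{2}+\r m\r+1)\r$, whereas you package the computation through $\xi=n^{-1}\r$ and the conjugate subgroup $\r S_{(n^{-1})}\r^{-1}$ --- a mild but equivalent reorganization, with the same crucial appearance of $m^{-1}$ rather than $m$.
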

\begin{proof} If $V$ is a triality module for $G$ then $V\mid_{S_{(m)}}$ is a triality module for $S_{(m)}$
by Lemma \ref{sm_com}, since $\M(G)\le \M(G\rd V)$. Let us prove the converse.

Let $g\in G$ and $v\in V$. Set $x=gv\in G\rd V$. We have
$$[x,\s]=v^{-1}mv^\s=mv^{-m+\s},$$
where $m=[g,\s]$. Since $mm^{\r}m^{\r^2}=1$, we obtain
\begin{align*}
&[x,\s][x,\s]^\r[x,\s]^{\r^2}=mv^{-m+\s}m^\r v^{-m\r+\s\r}m^{\r^2}v^{-m\r^2+\s\r^2}=\\
&mm^\r  m^{\r^2}v^{-mm^\r m^{\r^2}+\s m^\r m^{\r^2}-m\r m^{\r^2}+\s\r m^{\r^2}-m\r^2+\s\r^2}=\\
&v^{-1+\s m^{-1}-\r m^{-1}+\s\r^2 m\r^2-m\r^2+\s\r^2}=v^{(\s-\r)(m^{-1}+\r^2 m\r^2+\r^2)}=\\
&v^{\r^2(\s-1)(\r^2 m^{-1}\r^2+\r m\r+1)\r}=0,
\end{align*}
where the last equality holds
because the operator $(\s-1)(\r^2 m^{-1}\r^2+\r m\r+1)$ annihilates $V$ by the assumption. The claim follows.
\end{proof}

\section{Triality modules and tensor product}\label{sect}

Let $H$ be a group with triality $S$. Denote $\wt{H}=S\rd H$. Let $V_1,V_2,U$ be triality $R\wt H$-modules.
Suppose that $\vf:V_1\otimes V_2\to U$ is an $R\wt H$-module homomorphism. For brevity, we will write $v_1\bt v_2=\vf(v_1\otimes v_2)$, where
$v_1\in V_1$, $v_2\in V_2$. Then, in particular, we have
\be\label{balh}
v_1^h \bt v_2^h = (v_1 \bt v_2)^h
\ee
for all $h\in \wt{H}$. We endow the Cartesian product $W=V_1\times V_2\times U$ with the operation
\be\label{oper}
(v_1,v_2,u)(v_1',v_2',u')=(v_1+v_1',v_2+v_2',u+u'+v_1\bt v_2')
\ee
which turns $W$ into a nilpotent group of class (at most) $2$ with a central subgroup (isomorphic to) $U$. Moreover, setting
$(v_1,v_2,u)^h=(v_1^h,v_2^h,u^h)$
for every $h\in \wt{H}$ and $(v_1,v_2,u)\in W$ (with the $R\wt{H}$-module action of $h$ on the components)
correctly defines an action of $\wt{H}$ on $W$ due to (\ref{balh}).
The resulting group $G=H\rd W$ has a natural $S$-action, and the normal series
$$
1\nor U\nor W \nor G
$$
is $S$-invariant with the corresponding factors being groups with triality~$S$.
In general, the triality on the factors of a normal series of a group does not imply the triality on the whole group.
We obtain the following criterion:
\begin{lem} \label{trnil2}
%Let $H$ be a group with triality $S$ and let $V_1,V_2,U$ be triality $R[S\rd H]$-modules with
%a module homomorphism $\vf:V_1\otimes V_2\to U$. Then, in the above notation,
The group $G$ constructed above is a group with triality $S$ if and only if
$$
l_1^{\r^2m\r^2}\bt l_2^{(\r^2m\r^2)^2}\in \C_U(\s)
$$
for all $m\in \M(H)$, $l_1\in \M(V_1)$, $l_2\in \M(V_2)$.
\end{lem}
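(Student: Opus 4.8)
The plan is to measure the failure of the triality identity by the element
\[
T(x)=(x^{-1}x^\s)(x^{-1}x^\s)^\r(x^{-1}x^\s)^{\r^2},\qquad x\in G,
\]
and to show that $T$ takes values in the central, commutative subgroup $U$, so that everything reduces to a computation inside $U$. First I would note that $W/U\cong V_1\oplus V_2$ is a direct sum of triality $R\wt H$-modules, hence itself a triality module for $H$ by Lemma~\ref{tr_kr}; consequently $G/U=H\rd(W/U)$ is a group with triality, so $T(x)\in U$ for all $x$. Thus $G$ is a group with triality if and only if $T(x)=1$ for every $x$. Writing a general element as $x=hw$ with $m=[h,\s]\in\M(H)$ and $w=(v_1,v_2,u)$, it then suffices to let $m$ range over $\M(H)$ and $v_1,v_2,u$ over $V_1,V_2,U$.

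Next I would bring $[x,\s]$ to the normal form $[x,\s]=m\,w''$ with $w''\in W$, computing $w''$ componentwise from $w^{-1}$, the conjugation by $m$, and $w^\s$; the only subtlety is that the $U$-component of $w''$ acquires a bilinear correction of the shape $-\,v_1^m\bt v_2^{\s-m}$ coming from (\ref{oper}). Collecting the factors $m,m^\r,m^{\r^2}$ to the left and using the triality relation $mm^\r m^{\r^2}=1$ (equivalently $m^\r m^{\r^2}=m^{-1}$) yields
\[
T(x)=(w'')^{m^{-1}}\,(w'')^{\r^2 m\r^2}\,(w'')^{\r^2},
\]
which is exactly where the operator $\r^2 m\r^2$ of the statement (the ``$\r$'' of the twisted triality $S_{(m)}$ of Lemma~\ref{sm_com}) enters. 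Expanding the $U$-component of this triple product by means of (\ref{oper}) and the balanced identity (\ref{balh}), I would split it into a part linear in $u$ and a purely bilinear part in $(v_1,v_2)$.

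The linear part equals $u^{(\s-m)(m^{-1}+\r^2 m\r^2+\r^2)}=u^{(\s-\r)(m^{-1}+\r^2 m\r^2+\r^2)}$, and it vanishes because $U$ is a triality module: the operator factors as $\r^2(\s-1)(\r^2 m^{-1}\r^2+\r m\r+1)\r$, exactly as in the proof of Lemma~\ref{tr_kr}. What remains is a bilinear form $B(v_1,v_2;m)\in U$ assembled from the $\bt$-correction inside $w''$ and from the cross terms of the triple product, with the three factors twisted by $e_0=m^{-1}$, $e_1=\r^2 m\r^2$, $e_2=\r^2$. Using the module relations $(\s-1)(1+\r+\r^2)=0$ on $V_1$ and $V_2$ to telescope the sums $\sum_j(\cdot)^{e_j}$ to zero, and the identity $m\r^2 m\r^2=\r m^{-1}$, I would collapse $B$ to a short expression and show that $B\equiv 0$ for all $v_1,v_2,m$ is equivalent to $l_1^{\r^2 m\r^2}\bt l_2^{(\r^2 m\r^2)^2}\in\C_U(\s)$ for all $l_1\in\M(V_1)$, $l_2\in\M(V_2)$, $m\in\M(H)$.

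The main obstacle is precisely this last reduction: tracking the several $\bt$-terms through the conjugations by $m^{-1},\r^2 m\r^2,\r^2$ and simplifying their sum to the single clean condition. Care is needed because $\bt$ is only balanced, so it interacts with the $\wt H$-action only through (\ref{balh}) and not $R$-bilinearly over the group ring, and because one must pass from arbitrary $v_i\in V_i$ to the loop elements $l_i=v_i^{\s-1}\in\M(V_i)$; the relations $l_2+l_2^\r+l_2^{\r^2}=0$ and the analogous one for $V_1$ are what make the surplus terms coalesce. Conceptually, the resulting condition is the $\bt$-analogue, for the twisted triality $S_{(m)}$, of the module condition $(\s-1)(1+\r^2 m\r^2+(\r^2 m\r^2)^2)=0$, with the ``cross term'' $l_1^{\r^2 m\r^2}\bt l_2^{(\r^2 m\r^2)^2}$ being forced to be $\s$-invariant.
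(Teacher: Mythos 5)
Your overall strategy coincides with the paper's: write $g^{-1}g^\s$ in the normal form $m\,w''$ with $w''\in W$, observe that the whole triple product lands in $U$ (the paper gets this by computing the first three components explicitly; your argument via $G/U=H\rd(V_1\oplus V_2)$ being a group with triality is a clean shortcut), kill the part linear in $v_1,v_2,u$ by the same operator factorization as in Lemma~\ref{tr_kr}, and reduce the surviving bilinear residue to the stated condition. Your normal form $T(x)=(w'')^{m^{-1}}(w'')^{\r^2m\r^2}(w'')^{\r^2}$ is correct and is a tidy repackaging of what the paper multiplies out term by term.

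The gap is that the decisive step is only promised, not performed --- you yourself label it ``the main obstacle.'' Collapsing the several $\bt$-terms (the correction $v_1^m\bt v_2^{m-\s}$ inside $w''$ together with the cross terms produced by the two multiplications via (\ref{oper})) into a single expression, and identifying that expression as $(-l+l^\s)^{\r m\r^2}$ with $l=l_1^{\r^2m^{-1}\r^2}\bt l_2^{(\r^2m^{-1}\r^2)^2}$, is where the exact shape of the criterion comes from: the operators $\r^2m\r^2$ and $(\r^2m\r^2)^2$, and the fact that the obstruction is measured by $\C_U(\s)$ rather than by some other condition. Without this computation neither direction of the ``if and only if'' is established. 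Note also that the required substitutions are not the uniform $l_i=v_i^{\s-1}$ you indicate: in the paper $l_1=v_1^{\r^2(1-\s)}$ while $l_2=v_2^{\r m^{-1}\r^2(1-\s)}$, and it is precisely this asymmetric twisting that makes the residue close up into a single $\s$-commutator of $l_1^{\r^2m\r^2}\bt l_2^{(\r^2m\r^2)^2}$ (after replacing $m$ by $m^{-1}$, harmless since $\M(H)$ is closed under inversion); one also needs the easy observation that these maps are onto $\M(V_1)$ and $\M(V_2)$ to obtain the ``only if'' direction. So the plan is sound and parallel to the paper's, but the heart of the proof is missing.
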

\begin{proof} Elements of $G$ will be written as $(h,v_1,v_2,u)$, where $h\in H$, $(v_1,v_2,u)\in W$. Then the
multiplication and inversion in $G$ are given explicitly by
\be\label{pi}
\begin{array}{r@{}l}
(h,v_1,v_2,u)(h',v_1',v_2',u'&)=(hh',v_1^{h'}+v_1',v_2^{h'}+v_2',u^{h'}+u'+v_1^{h'}\bt v_2'), \\[5pt]
(h,v_1,v_2,u)^{-1}=(&h^{-1},-v_1^{h^{-1}},-v_2^{h^{-1}},-u^{h^{-1}}+v_1^{h^{-1}}\bt v_2^{h^{-1}}).
\end{array}
\ee
We now check the triality for $G$. Let $g=(h,v_1,v_2,u)\in G$. Then setting $m=h^{-1}h^\s$ we have
\be\label{gmgs}
g^{-1}g^\s=(m,v_1^{-m+\s},v_2^{-m+\s},u^{-m+\s}+v_1^m\bt v_2^{m-\s}).
\ee
Using the fact that $H$ is a group with triality and that $V_1$, $V_2$, $U$ are triality $R\wt H$-modules we
have
\begin{align*}
(g^{-1}&g^\s)(g^{-1}g^\s)^\r(g^{-1}g^\s)^{\r^2}\\
=&(m,v_1^{-m+\s},v_2^{-m+\s},u^{-m+\s}+v_1^m\bt v_2^{m-\s})\\
\times&(m^\r,v_1^{-m\r+\s\r},v_2^{-m\r+\s\r},u^{-m\r+\s\r}+v_1^{m\r}\bt v_2^{m\r-\s\r})\\
\times&(m^{\r^2},v_1^{-m\r^2+\s\r^2},v_2^{-m\r^2+\s\r^2},u^{-m\r^2+\s\r^2}+
v_1^{m\r^2}\bt v_2^{m\r^2-\s\r^2})\\
=&(mm^\r,v_1^{-mm^\r+\s m^\r-m\r+\s\r},v_2^{-mm^\r+\s m^\r-m\r+\s\r},u^{-mm^\r+s m^\r-m\r+\s\r}\\
&\quad +v_1^{mm^\r}\bt v_2^{mm^\r-\s m^\r}+v_1^{m\r}\bt v_2^{m\r-\s\r}
+v_1^{-mm^\r+\s m^\r}\bt v_2^{-m\r+\s\r})\\
\times&(m^{\r^2},v_1^{-m\r^2+\s\r^2},v_2^{-m\r^2+\s\r^2},
u^{-m\r^2+\s\r^2}+v_1^{m\r^2}\bt v_2^{m\r^2-\s\r^2})=
\end{align*}
[by $mm^\r m^{\r^2}=1$]
\begin{align*}
=(&1,v_1^{-1+\s m^{-1}-\r m^{-1}+\s\r^2 m \r^2 -m\r^2+\s\r^2},
v_2^{-1+\s m^{-1}-\r m^{-1}+\s\r^2 m \r^2 -m\r^2+\s\r^2},\\
& u^{-1+\s m^{-1}-\r m^{-1}+\s\r^2 m \r^2 -m\r^2+\s\r^2}+v_1\bt v_2^{1-\s m^{-1}}
+v_1^{\r m^{-1}}\bt v_2^{\r m^{-1}-\s\r^2 m \r^2}\\
&\quad +v_1^{-1+\s m^{-1}}\bt v_2^{-\r m^{-1}+\s\r^2 m \r^2}+v_1^{m\r^2}\bt v_2^{m\r^2-\s\r^2}\\
&\quad +v_1^{-1+\s m^{-1}-\r m^{-1}+\s\r^2 m \r^2}
\bt v_2^{-m\r^2+\s\r^2})=
\end{align*}
[by $w^{-1+\s m^{-1}-\r m^{-1}+\s\r^2 m \r^2 -m\r^2+\s\r^2}=0$ for $w=v_1,v_2,u$ (as in proof of Lemma \ref{tr_kr})]
\begin{align*}
&=(1,0,0, v_1\bt v_2^{1-\s m^{-1}+\r m^{-1}-\s\r^2 m \r^2}
+v_1^{\r m^{-1}}\bt v_2^{\r m^{-1}-\s\r^2 m \r^2}\\
&\quad +v_1^{\s m^{-1}}\bt v_2^{-\r m^{-1}+\s\r^2 m \r^2}
+v_1^{m\r^2}\bt v_2^{m\r^2 -\s\r^2}
+v_1^{m \r^2-\s \r^2}\bt v_2^{-m\r^2+\s\r^2})\\
&=(1,0,0, v_1\bt v_2^{-m \r^2+\s\r^2}
+v_1^{\r m^{-1}-\s m^{-1}}\bt v_2^{\r m^{-1}-\s\r^2 m \r^2}
+v_1^{-\s \r^2}\bt v_2^{-m\r^2+\s\r^2})\\
&=(1,0,0, -v_1^{\r^2(1-\s)\r}\bt v_2^{\r m^{-1}\r^2(1-\s)\r^2m^{-1}}
+v_1^{\r^2(1-\s)\r^2 m^{-1}}\bt v_2^{\r m^{-1}\r^2(1-\s)\r}
).
\end{align*}

The elements $l_1=v_1^{\r^2(1-\s)}$ and $l_2=v_2^{\r m^{-1}\r^2(1-\s)}$ run through $\M(V_1)$ and $\M(V_2)$ as
$v_1$ and $v_2$ run through $V_1$ and $V_2$, respectively. Hence, $G$ is a group with triality if and only if
the element
$$-l_1^\r\bt l_2^{\r^2m^{-1}}+l_1^{\r^2 m^{-1}}\bt l_2^\r=(-l+l^\s)^{\r m\r^2}$$
is zero, where $l=l_1^{\r^2m^{-1}\r^2}\bt l_2^{(\r^2m^{-1}\r^2)^2}$. The claim follows.

\end{proof}

\section{The group with triality}

Let $R$ be a commutative unital ring and let $R_0$ be a subgroup of $R^\times$. We set $T=R_0\times R_0$ and
let $S=\la\s,\r\ra$ act on $T$ according to

$$
[\s]=\left(\begin{array}{rr}
                    -1&.\\
                    1&1
                   \end{array}\right), \quad
[\r]=\left(\begin{array}{rr}
                    .&1\\
                    -1&-1
                   \end{array}\right),
$$
e.\,g., $(r_1,r_2)^\s=(r_1^{-1}r_2,r_2)$, $r_i\in R_0$. Then $T$ is a group with triality $S$ with $\M(T)=\{
(r,1)\mid r\in R_0 \}\cong R_0$. Denote $\wt{T}=S\rd T$.

 We define an $R\wt{T}$-module $V$ as a free $R$-module
of rank $3$ with basis $\bm{e}=\{e_1,e_2,e_3\}$ in which the corresponding $R$-representation $\Psi$ for
$\wt{T}$ is given by
\begin{equation}\label{rep}
\begin{array}{rl}
\Psi:&(r_1,r_2)\mapsto\left(\begin{array}{ccc}
                           r_1&.&.\\
                           .&r_1^{-1}r_2&.\\
                           .&.&r_2^{-1}
                       \end{array}\right),\\[20pt]
&\s\mapsto\left(\begin{array}{ccc}
                           .&1&.\\
                           1&.&.\\
                           .&.&1
                       \end{array}\right),\quad
\r\mapsto\left(\begin{array}{ccc}
                           .&1&.\\
                           .&.&1\\
                           1&.&.
                       \end{array}\right).
\end{array}
\end{equation}
Swapping  $r\leftrightarrow r^{-1}$ for $r\in R_0$ gives the matrices of the contragredient representation
$\Psi^*$ in the dual basis  $\bm{e^*}=\{e_1^*,e_2^*,e_3^*\}$ of the corresponding module $V^*$.

\begin{lem} \label{bm}
The $R\wt{T}$-modules $V$ and $V^*$ are triality modules for~$T$.
\end{lem}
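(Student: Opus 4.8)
The plan is to apply the criterion from Lemma~\ref{tr_kr}, which reduces the question of whether $V$ (and $V^*$) is a triality module for $T$ to checking, for every $m\in\M(T)$, that the restriction $V\mid_{S_{(m)}}$ is a triality module for the complement $S_{(m)}=\la\s,\r^2m\r^2\ra$. Recall from the discussion after Lemma~\ref{sm_com} that an $RS$-module is a triality module precisely when the operator $(\s-1)(1+\r+\r^2)$ annihilates it. Thus the concrete task is: for each $m\in\M(T)=\{(r,1)\mid r\in R_0\}$, verify that the operator
\be
(\s-1)(1+\tilde\r+\tilde\r^2)
\ee
annihilates $V$, where $\tilde\r=\r^2m\r^2$ is the twisted triality automorphism of $S_{(m)}$.

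First I would compute the matrix of $\tilde\r=\Psi(\r)^2\Psi(m)\Psi(\r)^2$ explicitly. Using the representation~(\ref{rep}), $\Psi(\r)$ is the cyclic permutation matrix sending $e_1\mapsto e_3$, $e_2\mapsto e_1$, $e_3\mapsto e_2$ (reading columns), and $\Psi(m)=\diag(r,r^{-1},1)$ for $m=(r,1)$. Conjugating the diagonal by the appropriate power of the $3$-cycle permutes the diagonal entries, so $\tilde\r$ is a monomial matrix, namely a $3$-cycle with the diagonal weights $r,r^{-1},1$ redistributed cyclically. Next I would form $1+\tilde\r+\tilde\r^2$; since $\tilde\r$ is essentially a weighted $3$-cycle this is a full $3\times 3$ matrix whose $(i,j)$ entry is a product of the weights along the path from $j$ to $i$. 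Finally, multiplying on the left by $(\s-1)$, where $\Psi(\s)$ is the transposition swapping $e_1,e_2$ and fixing $e_3$, I expect the rows of the product to cancel in pairs and yield the zero matrix, using $r\cdot r^{-1}\cdot 1=1$ at the key places.

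The computation for $V^*$ is entirely parallel: by the remark before the lemma, the matrices of $\Psi^*$ are obtained from those of $\Psi$ by the substitution $r\leftrightarrow r^{-1}$, so the same verification goes through verbatim with $r$ replaced by $r^{-1}$ throughout, and the annihilation identity is symmetric under this swap. Hence it suffices to treat $V$ and record that the dual case is identical.

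The main obstacle I anticipate is purely bookkeeping: getting the monomial entries of $\tilde\r$ and of $1+\tilde\r+\tilde\r^2$ in the right positions with the correct products of $r^{\pm1}$, since a single misplaced weight will break the cancellation. The conceptual content is light---everything follows mechanically from Lemma~\ref{tr_kr} and the diagonalizable, monomial form of the matrices in~(\ref{rep})---so the proof should reduce to exhibiting that $(\s-1)(1+\tilde\r+\tilde\r^2)=0$ by a direct matrix multiplication, which one can either display in full or summarize as a routine check.
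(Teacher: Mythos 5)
Your proposal matches the paper's proof: the paper likewise invokes Lemma~\ref{tr_kr}, reduces to the direct verification that $\Psi((\s-1)(1+\t+\t^2))=0$ for the monomial matrix $\t=\r^2m\r^2$ (displayed explicitly in~(\ref{rrmrr})), and disposes of $V^*$ by duality. The only difference is that the paper records the matrix of $\t$ and leaves the multiplication as ``a direct verification,'' exactly the routine check you describe.
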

\begin{proof} By duality, we may consider $V$ only. A direct verification shows that $\Psi((\s-1)(1+\t+\t^2))$ is the zero
matrix, where $\t$ has the form
\be\label{rrmrr}
\r^2m\r^2=\left(\begin{array}{ccc}
                            .&1&.\\
                           .&.&r\\
                           r^{-1}&.&.
                       \end{array}\right)
\ee
for $m=(r,1)\in \M(T)$, $r\in R_0$. The claim now follows by Lemma \ref{tr_kr}.
\end{proof}

The contragredient module $V^*$ can be realized as a direct summand of the symmetric square $S^2V$ due to the following

\begin{lem} \label{mon} The $2$-homogeneous component of $R[x_1,x_2,x_3]$ splits under the
action of $\wt{T}$ given by $t:x_i\mapsto \sum_j(\Psi(t))_{ij}x_j$, $t\in \wt{T}$ into the direct sum
\be\label{ds} \la x_2x_3,x_1x_3,x_1x_2\ra_R\oplus \la x_1^2,x_2^2,x_3^2\ra_R.\ee The first summand
is isomorphic to $V^*$ as an $R\wt{T}$-module under the map $\g:\ x_ix_j\mapsto e_k^*$ whenever
$\{i,j,k\}=\{1,2,3\}$. The second summand is isomorphic to $V^*$ under $\delta:\ x_i^2\mapsto
e_i^*$ for $i=1,2,3$ provided that $R_0$ has exponent $3$.
\end{lem}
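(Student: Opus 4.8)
The plan is to reduce all three assertions to a generator-by-generator comparison of the induced $\wt{T}$-action on degree-$2$ monomials with the contragredient action on $V^*$. I regard $\wt{T}$ as acting on $R[x_1,x_2,x_3]$ by $R$-algebra automorphisms, so that $t\cdot(x_ix_j)=(t\cdot x_i)(t\cdot x_j)$ and it suffices to know the generators. Reading off (\ref{rep}), the involution $\s$ transposes $x_1,x_2$ and fixes $x_3$, the element $\r$ cycles $x_1\mapsto x_2\mapsto x_3\mapsto x_1$, and a torus element $(r_1,r_2)$ scales $x_1,x_2,x_3$ by $r_1$, $r_1^{-1}r_2$, $r_2^{-1}$ respectively. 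For the splitting (\ref{ds}), the six monomials $x_i^2$ and $x_jx_k$ form an $R$-basis of the $2$-homogeneous component, so the two displayed submodules are already complementary $R$-summands, and only their $\wt{T}$-invariance needs checking. This is immediate from the description above: $\s$ and $\r$ permute the three squares among themselves and the three mixed products among themselves, while the torus scales each monomial, so no generator carries a square into a mixed product. Hence both summands are $R\wt{T}$-submodules.

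For the map $\g$ I would check that it intertwines the two actions on each generator, comparing with $\Psi^*$, which (as noted after (\ref{rep})) has the same permutation matrices as $\Psi$ on $\s,\r$ and is obtained by $r\leftrightarrow r^{-1}$ on the torus. Under $\s$ and $\r$ the mixed products are permuted exactly as the dual vectors $e_k^*$ are under $\Psi^*$, which is precisely the labelling $x_jx_k\mapsto e_i^*$ with $\{i,j,k\}=\{1,2,3\}$. For the torus, the monomials $x_2x_3,\ x_1x_3,\ x_1x_2$ acquire the scalars $r_1^{-1},\ r_1r_2^{-1},\ r_2$, and these are exactly the diagonal entries of $\Psi^*((r_1,r_2))$ on $e_1^*,e_2^*,e_3^*$. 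Thus $\g$ is an $R\wt{T}$-isomorphism, and this holds for an arbitrary subgroup $R_0$.

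For the map $\d$ the situation on $\s$ and $\r$ is identical (the squares are permuted just like the $e_i^*$), so the entire content sits in the torus. Here $x_1^2,\ x_2^2,\ x_3^2$ acquire the scalars $r_1^2,\ r_1^{-2}r_2^2,\ r_2^{-2}$, whereas the matching $x_i^2\mapsto e_i^*$ demands the $\Psi^*$-scalars $r_1^{-1},\ r_1r_2^{-1},\ r_2$. Equating the first and third pairs forces $r_1^3=1$ and $r_2^3=1$ for all $r_1,r_2\in R_0$, that is, $R_0$ of exponent $3$; conversely, under this hypothesis the identity $r^2=r^{-1}$ makes all three pairs of scalars agree (the middle one automatically). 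I expect this eigenvalue matching to be the only substantive step of the proof — everything else is permutation bookkeeping — and it is exactly what pins down why the exponent-$3$ hypothesis is needed for the second summand but not for the first.
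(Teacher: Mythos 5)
Your proof is correct and follows essentially the same route as the paper's: a direct verification that the monomial matrices preserve the two summands of (\ref{ds}) and that the induced scalars on the degree-$2$ monomials match the diagonal entries of $\Psi^*$, with the relation $r_1r_2r_3=1$ (equivalently your explicit scalars $r_1^{-1},\,r_1r_2^{-1},\,r_2$) handling $\g$ and the condition $r^2=r^{-1}$, i.e.\ $r^3=1$, handling $\delta$. The only cosmetic difference is that you check the generators $\s$, $\r$ and the torus elements one at a time, whereas the paper treats an arbitrary monomial matrix $\sum_k r_k e_{k,k^{\tau}}$ uniformly.
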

\begin{proof} For every $t\in \wt{T}$, the matrix $\Psi(t)$ is monomial, hence has the form
$\sum_k r_ke_{k,k^{\t}}$ for suitable $r_k\in R$ and $\t\in \rm{Sym}_3$, where $e_{i,j}$ are the matrix units.
Therefore, $t$ acts by $t:x_i\mapsto r_ix_{i^\t}$, which implies the decomposition (\ref{ds}). It also implies
that
$$t:x_ix_j\mapsto r_i r_j x_{i^\t,j^\t}\stackrel{\gamma}{\mapsto}r_k^{-1} e_{k^\t}^*$$
whenever $\{i,j,k\}=\{1,2,3\}$, because for all matrices in (\ref{rep}) we have $r_1r_2r_3=1$; and

$$t:x_i^2\mapsto r_i^2 x_{i^\t}^2\stackrel{\delta}{\mapsto}r_i^{-1} e_{i^\t}^*,$$
if $r_i^3=1$. However, $\Psi^*(t)=\sum_k r_k^{-1}e_{k,k^{\t}}$. Hence, $\g$ extends to an isomorphism onto
$V^*$ and so does $\delta$ whenever $R_0$ has exponent $3$.
\end{proof}

%We also observe that $\Psi^*$ is equivalent to the representation of $\wt{T}$ on the
%of symmetric $3\times 3$-matrices over $R$ with zero diagonal defined by $A\mapsto \Psi(t)^\bot A\Psi(t)$ for all $t\in \wt{T}$.

Let $a,b\in R$ be fixed elements at least one of which is invertible.
We henceforth assume that one of the following conditions is fulfilled:
\begin{enumerate}
\renewcommand{\theenumi}{\Roman{enumi}}
\item $R_0^3=1$,
\item $b=0$.
\end{enumerate}
Then by Lemma \ref{bm} the submodule of $S^2V$ spanned by  $ax_i^2+bx_jx_k$, where $\{i,j,k\}=\{1,2,3\}$
is isomorphic to $V^*$. The invertibility of either of $a,b$ ensures that this submodule is complemented in $S^2V$.
Hence, there is an $R\wt T$-module homomorphism $\vf_{a,b}:V\otimes V\to V^*$ which is written in
the bases $\bm{e}$ and $\bm{e}^*$ as
\be\label{oper2}
\begin{array}{r@{}l}
(v_1,v_2,v_3)\bt(w_1,w_2,w_3)=\big(&a(v_2w_3+v_3w_2)+bv_1w_1,\\[5pt]
a(v_1&w_3+v_3w_1)+bv_2w_2,a(v_1w_2+v_2w_1)+bv_3w_3\big),
\end{array}
\ee
where $\bt=\bt_{a,b}=\vf_{a,b}\circ\otimes$. By the discussion in Section \ref{sect}, we may construct the group $G=T\rd W$,
where $W=V\times V\times V^*$ has
the operation (\ref{oper2}). We will henceforth denote $\wt{G}=\wt{T}\rd W=S\rd G$.

\begin{lem} \label{gtr} We have
\begin{enumerate}
\item[$(i)$] $G$ is a group with triality $S$,
\item[$(ii)$] ${\rm Z}_S(G)=1$ and $[G,S]=G$.
\end{enumerate}
\end{lem}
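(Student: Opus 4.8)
The plan is to prove the two parts of Lemma \ref{gtr} using the machinery already assembled, reducing everything to a single annihilation/centralizer condition.

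For part $(i)$, I would apply Lemma \ref{trnil2} directly. That lemma tells us $G=T\rd W$ is a group with triality $S$ precisely when
$$
l_1^{\r^2m\r^2}\bt l_2^{(\r^2m\r^2)^2}\in \C_U(\s)
$$
for all $m\in\M(T)$ and $l_1,l_2\in\M(V)$, where $U=V^*$. The first step is to pin down these three sets explicitly. Since $\M(T)=\{(r,1)\mid r\in R_0\}$, the operator $\r^2m\r^2$ is exactly the matrix in (\ref{rrmrr}). I would then compute $\M(V)=\{v^{-1}v^\s\}=\{v^{-\s+1}\}$; given the form of $\s$ in (\ref{rep}), an element $v=(v_1,v_2,v_3)$ yields $l=v-v^\s=(v_1-v_2,\,v_2-v_1,\,0)$, so $\M(V)=\{(s,-s,0)\mid s\in R\}$. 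Next I would apply the matrix (\ref{rrmrr}) and its square to such $l_1,l_2$ to get $l_1^{\r^2m\r^2}$ and $l_2^{(\r^2m\r^2)^2}$ as explicit triples, feed them into the bilinear pairing (\ref{oper2}), and finally check that the resulting element of $V^*$ is fixed by $\s$. The centralizer $\C_{V^*}(\s)$ is readily identified from the contragredient action (swap $r\leftrightarrow r^{-1}$ in the $\s$-matrix, which is its own inverse, so $\C_{V^*}(\s)=\{(w_1,w_2,w_3)\mid w_1=w_2\}$). The whole verification is a finite symbolic computation; the only subtlety is that it should succeed uniformly, and I expect the defining conditions $\{(v_2w_3+v_3w_2),\ldots\}$ in (\ref{oper2}) together with the symmetry $l_2=(t,-t,0)$ to force the first two components of the pairing to coincide automatically, independently of $a,b$ and of whether we are in case (I) or (II).

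For part $(ii)$, I would argue structurally rather than by raw computation. To show $[G,S]=G$, note that $G=T\rd W$ and that $S$ acts on both $T$ and each module summand $V,V,V^*$. It suffices to check $[T,S]$ generates $T$ and $[W,S]=W$. For $T=R_0\times R_0$ the commutator $[t,\s]=t^{-1}t^\s$ together with $[t,\r]$ already generate $T$ since $S$ acts irreducibly enough (the matrices $[\s],[\r]$ have no common nonzero fixed vector over $R_0$ in the relevant sense). For the module part I would show each of $V$ and $V^*$ satisfies $V=V(\s-1)+V(\r-1)+\cdots$, equivalently that $V$ has no nonzero $S$-fixed quotient; this follows because a triality module satisfies $V(\s-1)(1+\r+\r^2)=0$, and combined with the explicit action one checks $\C_V(S)=0$ and dually $V=[V,S]$.

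The harder half is ${\rm Z}_S(G)=1$. Recall ${\rm Z}_S(G)$ is the maximal normal subgroup of $G$ contained in $\C_G(S)$. The plan is to compute $\C_G(S)$ and then extract its largest normal-in-$G$ subgroup. First I would determine $\C_G(S)$ componentwise: an element $(t,v_1,v_2,u)$ centralizes $S$ iff $t\in\C_T(S)$ and each module component lies in $\C(S)$ of the respective module. Since $T$ is a group with triality with $\M(T)\cong R_0$ nontrivial, and since the only $S$-fixed part is controlled by the triality matrices, $\C_T(S)$ is trivial (the fixed-point condition $t^\s=t=t^\r$ forces $t=1$ because $[\s],[\r]$ have no common fixed point in $R_0\times R_0$). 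This already pushes $\C_G(S)$ into $W$. Then, using $\C_V(S)=0=\C_{V^*}(S)$ from part establishing $[G,S]=G$, I expect $\C_G(S)$ to reduce to at most a subgroup of the center $U=V^*$, namely $\C_U(S)$. The final step is to show that no nonzero such central, $S$-fixed element can be normal in $G$: conjugating a central element $u\in U$ by a general element of $G$ is trivial (it is central), so normality is automatic, and instead I must show $\C_U(S)=0$ outright. This is where the explicit contragredient action enters: an element fixed by both $\s$ and $\r$ in $V^*$ must satisfy $w_1=w_2=w_3$ and simultaneously the $\r$-cyclic condition, and feeding in the action of a generic $(r_1,r_2)\in T$ (scaling the coordinates by $r_1^{-1},r_1r_2^{-1},r_2$) forces $w_i=0$ unless all scalars are $1$; since $R_0$ is a nontrivial subgroup of $R^\times$ there exists $r\neq 1$, killing the fixed vector. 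I anticipate this last point---ruling out a nonzero $S$-fixed element of $V^*$ that is stable under the full $T$-action---to be the main obstacle, since it is exactly here that the nontriviality of $R_0$ (rather than merely of $R$) is used in an essential way.
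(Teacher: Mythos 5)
Your part $(i)$ is exactly the paper's proof: apply Lemma \ref{trnil2} with $\M(T)=\{(r,1)\}$ and $\M(V)=\{(s,-s,0)\}$, compute $(0,s_1,-s_1r)\bt(-s_2,0,rs_2)=s_1s_2(ar,ar,-a-br^2)$, and observe that the first two coordinates coincide, so the result lies in $\C_{V^*}(\s)$. This part is correct and needs no further comment.

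Part $(ii)$ is where your proposal breaks down, and the failures are concrete. The centralizers you assert to vanish do not: from (\ref{rep}) one gets $\C_V(S)=R(1,1,1)\ne 0$, likewise $\C_{V^*}(S)\ne 0$, and $[V,S]=V(\s-1)+V(\r-1)$ is the kernel of the coordinate-sum map $V\to R$, a proper submodule. On $T$, the common fixed points of $[\s]$ and $[\r]$ form $\{(r,r^{-1}):r^3=1\}$, which is all of $R_0$ in case (I) --- the paper's main case --- so $\C_T(S)\ne 1$ there. Finally, $U=V^*$ is central in $W$ but not in $G$: the group $T$ acts on it by $\diag(r_1^{-1},r_1r_2^{-1},r_2)$, so a subgroup of $U$ is not automatically normal in $G$. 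Consequently every intermediate claim in your reduction (``$\C_T(S)=1$'', ``$\C_V(S)=0$'', ``$V=[V,S]$'', ``normality is automatic'', ``show $\C_U(S)=0$ outright'') is false, and the purely componentwise computation of $\C_G(S)$ and $[G,S]$ cannot close the argument.

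What actually makes $(ii)$ work is the interplay between the $S$-action and normality in $G$, i.e.\ invariance under conjugation by $T$ and by $W$; this is the content of the paper's (terse) observation that every proper nontrivial normal $S$-invariant subgroup of $G$ contains $V^*$ and is contained in $W$, combined with the nontriviality of the $S$-action on $V^*$ and on $G/W\cong T$. For instance, if $N\le\C_G(S)$ is normal in $G$ and contains an element with $T$-part $(r_1,r_2)\ne 1$, then commutating with $(1,e_1,0,0)$ puts $((r_1-1)e_1,0,\ast)$ into $N$, and $S$-fixedness forces $r_1=1$; similarly, a nonzero $(w,w,w)\in\C_{V^*}(S)$ lying in a $T$-invariant subgroup of $\C_{V^*}(S)$ forces $w(r-1)=0$ for all $r\in R_0$. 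Your closing sentence does invoke the $T$-action in this spirit, but it contradicts your stated goal of proving $\C_U(S)=0$, and the earlier false vanishing claims mean the argument as written never reaches that point. You should recast $(ii)$ around normal $S$-invariant subgroups of $G$ rather than around fixed points of $S$ on the individual layers.
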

\begin{proof} $(i)$ Let $l_1,l_2$ be arbitrary elements of $\M(V)$. Then there exist $s_1,s_2\in R$ such
that $l_i=(s_i,-s_i,0)$. Let $m\in \M(T)$. Then $m=(r,1)$ for some $r\in R_0$ and $\r^2m\r^2$ is
as in (\ref{rrmrr}). By (\ref{oper2}), we have
$$
l_1^{\r^2m\r^2}\bt l_2^{(\r^2 m \r^2)^2}=(0,s_1,-s_1r)\bt(-s_2,0,rs_2)=s_1s_2(ar,ar,-a-br^2),
$$
which lies in $C_{V^*}(\s)$. The claim follows by Lemma \ref{trnil2}.

$(ii)$ Clearly, every proper nontrivial normal $S$-invariant subgroup of $G$ must include $V^*$ and be included in $W$.
Since $S$ induces a nontrivial action on both $V^*$ and $G/W$, the claim follows.

\end{proof}

\section{The Moufang loop}

Lemma \ref{gtr} implies the existence of a Moufang loop $\M(G)$ which depends on the parameters $R,R_0,a,b$.
Assuming that $R$ and $R_0$ are fixed, we will denote this loop by $M_{a,b}$ and determine its structure.

\begin{lem} \label{me} The Moufang loop $M_{a,b}$ consists of the elements of $G$ of the form
\be\label{form}
\big((r,1),x(1,-r^{-1},0),y(1,-r^{-1},0),z(-r^{-1},1,0)+xy(b,0,-ar^{-1})\big).
\ee
for $r\in R_0,\ x,y,z\in R$.
\end{lem}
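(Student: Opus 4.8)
The plan is to compute the defining set $\M(G)=\{g^{-1}g^\s\mid g\in G\}$ directly and match the result with (\ref{form}). Writing a general element as $g=(h,p,q,u)$ with $h=(h_1,h_2)\in T$, $p,q\in V$ and $u\in V^*$, I would start from the formula already established in the proof of Lemma \ref{trnil2}, namely
$$g^{-1}g^\s=(m,\,p^{-m+\s},\,q^{-m+\s},\,u^{-m+\s}+p^m\bt q^{m-\s}),\qquad m=h^{-1}h^\s.$$
Since $(r_1,r_2)^\s=(r_1^{-1}r_2,r_2)$ and $T$ is abelian, one computes $m=(h_1^{-2}h_2,1)$, so $m=(r,1)$ with $r=h_1^{-2}h_2$; as $h$ ranges over $T$ this $r$ ranges over all of $R_0$, producing the first coordinate $(r,1)$ of (\ref{form}).

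Next I would evaluate the two $V$-coordinates from (\ref{rep}), using $\Psi((r,1))=\diag(r,r^{-1},1)$ and $\Psi(\s)$ acting on coordinate row vectors by right multiplication. A direct computation gives $p^{-m+\s}=(p_2-p_1r,\,p_1-p_2r^{-1},\,0)$, and the identity $p_1-p_2r^{-1}=-r^{-1}(p_2-p_1r)$ shows this equals $x(1,-r^{-1},0)$ with $x=p_2-p_1r$; likewise $q^{-m+\s}=y(1,-r^{-1},0)$ with $y=q_2-q_1r$. The analogous calculation in $V^*$, using $\Psi^*((r,1))=\diag(r^{-1},r,1)$, yields $u^{-m+\s}=z_0(-r^{-1},1,0)$ with $z_0=u_1-u_2r$.

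The substantive part is the cross term $C:=p^m\bt q^{m-\s}$, which I would compute from (\ref{oper2}) with $p^m=(p_1r,p_2r^{-1},p_3)$ and $q^{m-\s}=(q_1r-q_2,\,q_2r^{-1}-q_1,\,0)$. I expect its third coordinate to satisfy the unconditional identity $C_3=-xy\,ar^{-1}$, checked by expanding both sides. To absorb the first two coordinates into $z(-r^{-1},1,0)+xy(b,0,-ar^{-1})$ I would set $z=z_0+C_2$, after which the only remaining requirement is $C_1+r^{-1}C_2=xyb$. Here the $a$-part of $C_1+r^{-1}C_2$ cancels identically (its two $p_3$-terms are negatives of one another), while the $b$-part equals $b(p_1q_1r^2-p_1q_2r+p_2q_2r^{-3}-p_2q_1r^{-2})$. \emph{This reconciliation of the $b$-terms is the main obstacle, and recognizing that the standing hypotheses are exactly what makes it succeed is the crux:} under $R_0^3=1$ one has $r^{-3}=1$ and $r^{-2}=r$, so the $b$-part collapses to $b(p_2-p_1r)(q_2-q_1r)=xyb$, while under $b=0$ both sides vanish.

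Finally I would conclude that every element $g^{-1}g^\s$ has the form (\ref{form}), and conversely that each tuple $(r,x,y,z)$ is realized: the choices $h=(1,r)$, $p=(0,x,0)$, $q=(0,y,0)$ force the correct $r,x,y$, and then a suitable $u$ (e.g. with $u_1-u_2r=z-C_2$) produces the prescribed $z$. This gives the asserted set equality and completes the description of $M_{a,b}$.
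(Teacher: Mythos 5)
Your argument is correct and follows the paper's own proof essentially step for step: both start from the expression (\ref{gmgs}) for $g^{-1}g^\s$, read off $x,y$ and the $U$-component, and reduce everything to the single identity $C_1+r^{-1}C_2=bxy$ (the paper's $z_1+r^{-1}z_2=bxy$), which holds precisely under hypothesis (I) or (II). The only differences are cosmetic and harmless: you make the surjectivity of the parametrization explicit, and you compute $m=(r,1)$ from a general $h$ (the paper's proof writes $(1,r)$, an apparent typo).
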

\begin{proof} Let $g=(t,v_1,v_2,u)\in G$. Then $g^{-1}g^\s$ is given by (\ref{gmgs}), where
$m=t^{-1}t^\s=(1,r)\in \M(T)$, $v_i=(v_{i1},v_{i2},v_{i3})$, $i=1,2$, $u=(u_1,u_2,u_3)$. Due to
$$\Psi(-m+\s)=\left(\begin{array}{ccc}
                            -r&1&.\\
                            1&-r^{-1}&.\\
                            .&.&.
                       \end{array}\right),$$
we have
\begin{align*}
v_1^{-m+\s}&=v_1\Psi(-m+\s)=x(1,-r^{-1},0),\quad \text{where}\ x=-rv_{11}+v_{12},\\
v_2^{-m+\s}&=v_2\Psi(-m+\s)=y(1,-r^{-1},1,0),\quad \text{where}\ y=-rv_{21}+v_{22},\\
u^{-m+\s}&=v_i\Psi^*(-m+\s)=w(-r^{-1},1,0),\quad \text{where}\ w=u_1-ru_2,\\
v_1^m\bt v_2^{m-\s}&=(rv_{11},r^{-1}v_{12},v_{13})\bt(-y,r^{-1}y,0)=(z_1,z_2,-ar^{-1}xy), \quad \text{where}\\
z_1&=y(ar^{-1}v_{13}-brv_{11}), \ z_2=y(-av_{13}+br^{-2}v_{12}).
\end{align*}
Since we assume (I) or (II), we have $br^3=b$ and so
$$
z_1+r^{-1}z_2=y(-brv_{11}+br^{-3}v_{12})=bxy.
$$
Hence, setting $z=w+z_2$, we have the required form of $g^{-1}g^\s$.
\end{proof}
By Lemma \ref{me}, sending an element (\ref{form}) to the tuple
$$
(r,x,y,z)\quad r\in R_0,\ x,y,z\in R
$$
gives a bijection from $M_{a,b}$. We will therefore assume that $M_{a,b}$ consists of all such tuples.

\begin{lem} \label{mult} The multiplication and inversion in the Moufang loop $M_{a,b}$ are given by
\begin{align*}
(r_1,x_1,&y_1,z_1)(r_2,x_2,y_2,z_2)=\\
(r_1&r_2,x_1+r_1x_2,y_1+r_1y_2,r_2z_1+z_2+a(x_1y_2-x_2y_1)+br_1^{-1}r_2x_1y_2),\\
(r,x,&y,z)^{-1}=(r^{-1},-r^{-1}x,-r^{-1}y,-r^{-1}z+bxy).
\end{align*}
\end{lem}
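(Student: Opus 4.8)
The plan is to compute the loop product directly from the defining formula $m.n=m^{-\r}\,n\,m^{-\r^2}$ (see \eqref{loop_mult}), using the explicit group operations \eqref{pi} in $G$ and the identification of $M_{a,b}$ with the elements \eqref{form} furnished by Lemma \ref{me}. The key simplification I would exploit is that every $m\in\M(G)$ satisfies $m^{-1}=m^\s$, whence $m^{-\r}=m^{\s\r}$ and $m^{-\r^2}=m^{\s\r^2}$; these two elements are obtained from $m$ by applying the automorphisms $\s\r$ and $\s\r^2$ \emph{componentwise}, i.e. through the representation $\Psi$ on the two $V$-components, the contragredient $\Psi^*$ on the $V^*$-component (which agrees with $\Psi$ on the permutation matrices for $\s,\r$), and the given action on the $T$-component. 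This avoids the inversion formula entirely and reduces everything to additive module actions together with the bilinear maps $\bt$ in \eqref{pi}. A further reduction: since $\M(G)$ is closed under the loop multiplication, the product $m.n$ is automatically of the shape \eqref{form}, so I need only read off the four parameters rather than verify the shape. Concretely, $r$ is the first entry of the $T$-component, $x$ and $y$ are the first entries of the two $V$-components, and $z$ is the \emph{second} entry of the $V^*$-component.

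I would then carry out the componentwise actions. On the $T$-level one gets, for $m=(r_1,x_1,y_1,z_1)$, the $T$-parts $(1,r_1^{-1})$ for $m^{-\r}$ and $(r_1,r_1)$ for $m^{-\r^2}$; multiplying these with the $T$-part $(r_2,1)$ of $n$ in the abelian group $T$ yields $(r_1r_2,1)$, so $r=r_1r_2$. The $V$-components carry no $\bt$-contribution, so the first entries of the two $V$-parts of the triple product come purely from the diagonal $T$-actions $\Psi(r_2,1)$ and $\Psi(r_1,r_1)$ and the additive superpositions of \eqref{pi}; a short calculation gives $x=x_1+r_1x_2$ and, symmetrically, $y=y_1+r_1y_2$.

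The main obstacle is the $z$-coordinate, i.e. the second entry of the $V^*$-component, the only place where the bilinear maps $\bt$ enter. Computing the two-fold product of \eqref{pi} and tracking the terms $v_1^{h'}\bt v_2'$ at each stage through the explicit formula \eqref{oper2}, one finds that the several quadratic contributions collapse: the pure $x_1y_1$ terms cancel, and the remainder assembles into $r_2z_1+z_2+a(x_1y_2-x_2y_1)+br_2^{-2}r_1^{-1}x_1y_2$. Here the standing hypothesis (I) or (II) is used exactly once, in the form $br^3=b$ (valid since $R_0^3=1$ or $b=0$): it converts $br_2^{-2}$ into $br_2$ and produces the asserted coefficient $br_1^{-1}r_2$ of $x_1y_2$. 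Keeping this bookkeeping of the $\bt$-terms and their diagonal twists straight is the only delicate point.

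For the inversion formula I would again use $m^{-1}=m^\s$ and apply $\s$ componentwise to \eqref{form}. Reading the result back in the shape \eqref{form} with parameter $r^{-1}$ gives at once the first two entries $-r^{-1}x$ and $-r^{-1}y$ and, from the second $V^*$-entry, $-r^{-1}z+bxy$; the first $V^*$-entry imposes the same identity $br^3=b$, confirming consistency under (I) or (II).
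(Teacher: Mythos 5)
Your proposal is correct and follows essentially the same route as the paper: a direct computation of $m_1^{-\r}m_2m_1^{-\r^2}$ and of the inverse, using the explicit group operations (\ref{pi}), the canonical form (\ref{form}) from Lemma \ref{me}, and the module actions, with the hypothesis (I) or (II) entering through $br^3=b$. The only differences are harmless bookkeeping shortcuts (computing $m^{-\r}=m^{\s\r}$ componentwise via $m^{-1}=m^\s$ rather than through the group inversion formula, and reading off only the determining coordinates), which do not change the substance of the argument.
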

\begin{proof} We first derive the inversion formula using (\ref{pi}). Denoting $m=(r,1)$ we have
\begin{align*}
&(r,x,y,z)^{-1}\leftrightarrow\big(m,x(1,-r^{-1},0),y(1,-r^{-1},0),z(-r^{-1},1,0)+xy(b,0,-ar^{-1})\big)^{-1}\\
&=\big(m^{-1},-x(1,-r^{-1},0)^{m^{-1}},-y(1,-r^{-1},0)^{m^{-1}},-z(-r^{-1},1,0)^{m^{-1}}\\
&\quad -xy(b,0,-ar^{-1})^{m^{-1}}+ x(1,-r^{-1},0)^{m^{-1}}\bt y(1,-r^{-1},0)^{m^{-1}} \big)\\
&=\big(m^{-1},-x(r^{-1},-1,0),-y(r^{-1},-1,0),-z(-1,r^{-1},0)-xy(br,0,-ar^{-1})\\
&\quad+xy(br^{-2},b,-2ar^{-1})\big)=\big((r^{-1},1),-r^{-1}x(1,-r,0),-r^{-1}y(1,-r,0),\\
&\quad-r^{-1}z(-r,1,0)+bxy(-r,1,0)+(-r^{-1}x)(-r^{-1}y)(b,0,-ar)\big)\\
&\leftrightarrow(r^{-1},-r^{-1}x,-r^{-1}y,-r^{-1}z+bxy).
\end{align*}
Similarly, with $m_i=(r_i,1)$, $i=1,2$, we have
\begin{align*}
&(r_1,x_1,y_1,z_1)(r_2,x_2,y_2,z_2)\leftrightarrow \big(m_1,x_1(1,-r_1^{-1},0),y_1(1,-r_1^{-1},0),z_1(-r_1^{-1},1,0)\\
&\quad+x_1y_1(b,0,-ar_1^{-1})\big)^{-\r}\big(m_2,x_2(1,-r_2^{-1},0),y_2(1,-r_2^{-1},0),z_2(-r_2^{-1},1,0)\\
&\quad+x_2y_2(b,0,-ar_2^{-1})\big)\big(m_1,x_1(1,-r_1^{-1},0),y_1(1,-r_1^{-1},0),z_1(-r_1^{-1},1,0)\\
&\quad+x_1y_1(b,0,-ar_1^{-1})\big)^{-\r^2}=\big(m_1^{-\r},x_1(-r_1^{-1},1,0)^\r,y_1(-r_1^{-1},1,0)^\r,z_1(1,-r_1^{-1},0)^\r\\
&\quad+x_1y_1(0,b,-ar_1^{-1})^\r\big)\big(m_2,x_2(1,-r_2^{-1},0),y_2(1,-r_2^{-1},0),z_2(-r_2^{-1},1,0)\\
&\quad+x_2y_2(0,b,-ar_2)\big)\big(m_1^{-\r^2},x_1(1,-r_1,0)^{\r^2},y_1(1,-r_1,0)^{\r^2},z_1(-r_1,1,0)^{\r^2}\\
&\quad+x_1y_1(0,b,-ar_1^{-1})^{\r^2}\big)=\big((r_2,r_1^{-1}),x_1(0,-r_1^{-1},1)^{m_2}+x_2(1,-r_2^{-1},0),\\
&\quad y_1(0,-r_1^{-1},1)^{m_2}+y_2(1,-r_2^{-1},0),z_1(0,1,-r_1^{-1})^{m_2}+z_2(-r_2^{-1},1,0)\\
&\quad +x_1y_1(-ar_1^{-1},0,b)^{m_2}+x_2y_2(b,0,-ar_2^{-1})+x_1y_2(0,-r_1^{-1},1)^{m_2}\bt(1,-r_2^{-1},0)\big)\\
&\times\big((r_1,r_1),x_1(1,0,-r_1^{-1}),y_1(1,0,-r_1^{-1}),z_1(-r_1^{-1},0,1)+x_1y_1(b,-ar_1^{-1},0)\big)\displaybreak\\
&=\big((r_1r_2,1),(x_2,-r_1^{-1}r_2^{-1}x_1-r_2^{-1}x_2,x_1)^{m_1^{-\r^2}}+x_1(1,0,-r_1^{-1}),\\
&\quad (y_2,-r_1^{-1}r_2^{-1}y_1-r_2^{-1}y_2,y_1)^{m_1^{-\r^2}}+y_1(1,0,-r_1^{-1}),
(-r_2^{-1}z_2,r_2z_1+z_2,-r_1^{-1}z_1)^{m_1^{-\r^2}}\\
&\quad +z_1(-r_1^{-1},0,1)+(-ar_1^{-1}r_2^{-1}x_1y_1+bx_2y_2,0,bx_1y_1-ar_2^{-1}x_2y_2)^{m_1^{-\r^2}}\\
&\quad +x_1y_1(b,-ar_1^{-1},0)+x_1y_2(-ar_2^{-1},a+br_1^{-1}r_2,-ar_1^{-1}r_2^{-1})^{m_1^{-\r^2}}\\
&\quad +(x_2,-r_1^{-1}r_2^{-1}x_1-r_2^{-1}x_2,x_1)^{m_1^{-\r^2}}\bt y_1(1,0,-r_1^{-1})\big)\\
&=\big((r_1r_2,1),(x_1+r_1x_2)(1,-r_1^{-1}r_2^{-1},0),(y_1+r_1y_2)(1,-r_1^{-1}r_2^{-1},0),\\
&\quad  (r_2z_1+z_2)(-r_1^{-1}r_2^{-1},1,0)+(-ar_1^{-2}r_2^{-1}x_1y_1+br_1^{-1}x_2y_2,0,br_1x_1y_1-ar_1r_2^{-1}x_2y_2)\\
&\quad +x_1y_1(b,-ar_1^{-1},0)+x_1y_2(-ar_1^{-1}r_2^{-1},a+br_1^{-1}r_2,-ar_2^{-1})\\
&\quad +(a(r_1^{-2}r_2^{-1}x_1y_1+r_1^{-1}r_2^{-1}x_2y_1)+br_1x_2y_1,a(-x_2y_1+r_1^{-1}x_1y_1),\\
&\quad a(-r_1^{-1}r_2^{-1}x_1y_1-r_2^{-1}x_2y_1)-br_1^{-2}x_1y_1)\big)=\big((r_1r_2,1),(x_1+r_1x_2)(1,-r_1^{-1}r_2^{-1},0),\\
&(y_1+r_1y_2)(1,-r_1^{-1}r_2^{-1},0),(r_2z_1+z_2+a(x_1y_2-x_2y_1)+br_1^{-1}r_2x_1y_2)(-r_1^{-1}r_2^{-1},1,0)\\
&\quad  +(x_1+r_1x_2)(y_1+r_1y_2)(b,0,-ar_1^{-1}r_2^{-1})\big)\leftrightarrow(r_1r_2,x_1+r_1x_2,y_1+r_1y_2,\\
&\quad r_2z_1+z_2+a(x_1y_2-x_2y_1)+br_1^{-1}r_2x_1y_2).
\end{align*}
\end{proof}

By Lemma \ref{mult}, the loop $M_{a,b}$ has the form $R_0.N$,
where $N=(R+R).R$ is a normal subgroup. It can be checked that
the subgroup $N$ is commutative if and only if $2a+b=0$. The associator of $(r_i,x_i,y_i,z_i)\in M_{a,b}$, $i=1,2,3$,
is trivial if and only if
$$
a\,\left|\begin{array}{ccc}
                           r_1-1&r_2-1&r_3-1\\
                           x_1&x_2&x_3\\
                           y_1&y_2&y_3
                       \end{array}\right|=0.
$$
In particular, $M_{a,b}$ is nonassociative, if $a\ne 0$, $|R_0|>1$, and $R$ is a domain, in
which case ${\rm Nuc}(M_{a,b})$ consists of the elements $(1,0,0,z)$, $z\in R$.

We observe that, for every $c\in R^\times$, the loops $M_{a,b}$ and $M_{ca,cb}$ are isomorphic, which can be shown
by changing the "coordinates" $(r,x,y,z)\mapsto(r,x,y,cz)$ in $M_{a,b}$. Hence, we may only consider the loops
$M_{1,b}$ and $M_{a,1}$. In particular, we obtain
abelian-by-cyclic Moufang loops, $M_{1,-2}$, with the multiplication (\ref{abm})
which split into two types:
\begin{enumerate}
\renewcommand{\theenumi}{\Roman{enumi}}
\item if the characteristic of $R$ is not $2$ then
$M_{1,-2}$ has the form $3.(R+R+R)$, i.e. $R_0$ is cyclic of order $3$,
\item if the characteristic of $R$ is  $2$ then
$M_{1,-2}=M_{1,0}$ has the form $R_0.N$, where  $R_0$ is an arbitrary cyclic subgroup of $R^\times$ and
$N=R+R+R$ is an elementary abelian $2$-group.
\end{enumerate}

\section{Embedding in the Cayley algebra}

We show that a particular case of the above-constructed series of Moufang loops, namely $M_{1,0}$,
can be embedded in a Cayley algebra. Recall that the Cayley algebra $\mathbb{O}=\mathbb{O}(R)$ can be defined
as set of all {\em Zorn matrices}
$$
\left( \begin{array}{cc}
  a & {\bf v} \\
 {\bf w} &  b \\
\end{array}
\right), \ \  a, b \in R, \ \ {\bf v},{\bf w}\in R^3
$$
with the natural structure of a free $R$-module and
multiplication given by the rule
\begin{equation} \label{cayley_mult}
\begin{array}{r@{}l}
\left( \begin{array}{cc}
  a_1 & {\bf v}_1 \\
 {\bf w}_1 &  b_1 \\
\end{array}
\right)\cdot
\left( \begin{array}{cc}
 a_2 & {\bf v}_2 \\
 {\bf w}_2 & b_2 \\
\end{array}
\right) =& \\[20pt]
\left( \begin{array}{cc}
 a_1 a_2+ {\bf v}_1 \cdot {\bf w}_2 &  a_1 {\bf v}_2 + b_2 {\bf v}_1 \\
 a_2 {\bf w}_1 +  b_1 {\bf w}_2 &  {\bf w}_1\cdot {\bf v}_2 +  b_1 b_2 \\
\end{array}
\right)& +
\left( \begin{array}{cc}
 0 & - {\bf w}_1\times {\bf w}_2 \\
 {\bf v}_1\times {\bf v}_2 & 0 \\
\end{array}
\right),
\end{array}
\end{equation}
where, for ${\bf v}=(v_1,v_2,v_3)$ and ${\bf w}=(w_1,w_2,w_3)$ in
$R^3$, we denoted
$$
\begin{array}{c}
{\bf v}\cdot{\bf w}= v_1w_1+v_2w_2+v_3w_3\in R,\\
{\bf v}\times{\bf w}=(v_2w_3-v_3w_2,v_3w_1-v_1w_3,v_1w_2-v_2w_1)\in R^3.
\end{array}
$$
It is well-known that $\mathbb{O}$ is an alternative algebra and the set of its invertible
elements $\mathbb{O}^\times$ forms a Moufang loop.

Consider the subset of $\mathbb{O}^\times$ of elements of the form
$$
\left( \begin{array}{cc}
  r & (0,x,y) \\
 (z,0,0) &  1 \\
\end{array}
\right)
$$
which we identify with the tuples $(r,x,y,z)$, where $r\in R_0\le R^\times$, $x,y,z\in R$. Using
(\ref{cayley_mult}) it can be checked that this subset is a subloop with the multiplication
\begin{align*}
(r_1,x_1,& y_1,z_1)(r_2,x_2,y_2,z_2)=\\
(r_1&r_2,x_1+r_1x_2,y_1+r_1y_2,r_2z_1+z_2+x_1y_2-x_2y_1),\\
\end{align*}
hence is isomorphic to $M_{1,0}$.

\section{The isomorphy problem}

Clearly, the loops $M_{a,b}$ and $M_{a',b'}$ are isomorphic if $(a',b')=(a,b)^\varphi$ for some
$\varphi\in {\rm Aut}(R)$. However, determining all isomorphisms among the loops $M_{a,b}$ seems to be
a challenging problem. In particular, we state

\begin{prob} \label{pris} Can the loops $M_{1,b}$ and $M_{1,b'}$ be isomorphic for non-${\rm Aut}(R)$-conjugate
elements $b,b'\in R$?
\end{prob}

We only prove the following particular result.

\begin{prop} \label{nis} Let $R$ be a field and let $0\ne b\in R$. Then $M_{1,0}$ is not isomorphic to $M_{1,b}$.
\end{prop}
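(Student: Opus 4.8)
The plan is to use the filtration by characteristic subloops together with two loop invariants that an isomorphism is forced to carry through one and the same additive map on the nucleus. Since $b\ne 0$ we are in case (I), so $R_0^3=1$; discarding the associative situation we may assume $R_0=\la\om\ra$ has order $3$, so that $\mathrm{char}\,R\ne 3$ and $\om^2+\om+1=0$. First I would record, from the discussion after Lemma \ref{mult}, that the nucleus is $\mathrm{Nuc}=\{(1,0,0,z)\}\cong(R,+)$ and that $\bar M=M/\mathrm{Nuc}$ is the group $R_0\ltimes(R+R)$ with multiplication $(r_1,x_1,y_1)(r_2,x_2,y_2)=(r_1r_2,x_1+r_1x_2,y_1+r_1y_2)$. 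Its derived subgroup is the translation part $R+R$ (as $\om-1$ is invertible), so $N=\{(1,x,y,z)\}$ is the preimage of $[\bar M,\bar M]$; consequently $\mathrm{Nuc}$, $N$ and $M/N\cong R_0$ are preserved by every isomorphism.

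Next I would extract the structure constants from Lemma \ref{mult}. For $g_1,g_2\in N$ the products $g_1g_2$ and $g_2g_1$ agree in all but the last coordinate, so the commutator $\kappa(g_1,g_2)=(g_2g_1)^{-1}(g_1g_2)$ lies in $\mathrm{Nuc}$ and equals $(1,0,0,(2+b)\la\bar g_1,\bar g_2\ra)$, where $\la(x_1,y_1),(x_2,y_2)\ra=x_1y_2-x_2y_1$. A direct computation from the same lemma gives the associator as the nuclear element $(1,0,0,\det(\cdots))$ (its vanishing being the criterion recorded after the lemma, and note that under $R_0^3=1$ this value is independent of $b$); specializing to $g_1,g_2\in N$ and $g_3$ of $r$-part $\om$ yields $(1,0,0,(\om-1)\la\bar g_1,\bar g_2\ra)$. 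I would also note that conjugation by an element of $r$-part $\om$ acts on $\mathrm{Nuc}\cong(R,+)$ as multiplication by $\om^{-1}$.

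Assume then that $\Phi\colon M_{1,0}\to M_{1,b}$ is an isomorphism. It induces an additive automorphism $\phi_0=\Phi|_{\mathrm{Nuc}}$ of $(R,+)$, a bijection $A$ of $\bar N=R+R$, and an automorphism $\om\mapsto\om^{\ve}$ ($\ve\in\{1,2\}$) of $R_0$; intertwining the conjugation actions gives $\phi_0(\om\,n)=\om^{\ve}\phi_0(n)$ for $n\in R$, the product being that of $R$. I would fix $\bar g_1,\bar g_2$ with $\la\bar g_1,\bar g_2\ra=1$ and set $w_0=\la A\bar g_1,A\bar g_2\ra$, which is nonzero since $A$ is bijective and $\la\cdot,\cdot\ra$ is nondegenerate alternating on $R+R$. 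Applying $\Phi$ to the associator identity and comparing last coordinates gives $\phi_0(\om-1)=(\om^{\ve}-1)w_0$; but $\phi_0(\om-1)=\om^{\ve}\phi_0(1)-\phi_0(1)=(\om^{\ve}-1)\phi_0(1)$ and $\om^{\ve}\ne 1$, so $\phi_0(1)=w_0$ and hence $\phi_0(2)=2w_0$ by additivity. Applying $\Phi$ to the commutator identity gives $\phi_0(2)=(2+b)w_0$. Comparing the two values of $\phi_0(2)$ forces $b\,w_0=0$, whence $b=0$ — the desired contradiction.

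The heart of the argument, and the step I expect to require the most care, is this coupling: neither invariant separates the loops by itself (the nuclei coincide, the associators agree, and for $b\ne -2$ the groups $N$ are even abstractly isomorphic Heisenberg groups), so the distinction can only be forced by making the commutator scalar $2+b$ and the associator normalization $\phi_0(1)=w_0$ pass through the one additive map $\phi_0$. The supporting facts I would verify in detail are that $N$ is genuinely characteristic (via $[\bar M,\bar M]$) and the exact nuclear values of the commutator and associator from Lemma \ref{mult}; note that no restriction on $\mathrm{char}\,R$ beyond $\ne 3$ is needed, since in characteristic $2$ the same two computations read $0=\phi_0(2)=(2+b)w_0=b\,w_0$.
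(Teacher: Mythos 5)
Your proof is correct, but it takes a genuinely different route from the paper's. The paper passes to the enveloping group with triality: by Lemma \ref{gtr}(ii) one has $G_{a,b}=\E(M_{a,b})$, so by Lemma \ref{mmini} it suffices to show that $G_{1,0}$ and $G_{1,b}$ are not isomorphic as abstract groups, and this is done by exhibiting canonically defined subgroups $\widehat{Q}_i$ (preimages in $W=[G,G]$ of the homogeneous components of $W/{\rm Z}(W)$) that are abelian precisely when $b=0$. You instead argue entirely inside the loop: you couple the commutator form $(2+b)\la\cdot,\cdot\ra$ on $N/\mathrm{Nuc}$ with the associator form $(\om^{\ve}-1)\la\cdot,\cdot\ra$ (which is independent of $b$) through the single additive map $\phi_0=\Phi|_{\mathrm{Nuc}}$; the associator identity pins down $\phi_0(1)=w_0$, and the commutator identity then forces $bw_0=0$. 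This avoids groups with triality and Lemma \ref{mmini} altogether, at the cost of somewhat longer in-loop computations; I checked the two you flag, and they are right: for $g_1,g_2\in N$ and $g_3$ with $r$-part $s$, the products $(g_1g_2)g_3$ and $g_1(g_2g_3)$ differ only in the last coordinate, by $(s-1)(x_1y_2-x_2y_1)$, independently of $b$ and of the remaining coordinates of $g_3$, while $(g_2g_1)^{-1}(g_1g_2)=(1,0,0,(2+b)(x_1y_2-x_2y_1))$. One small repair: your stated reason that $w_0\ne 0$ (bijectivity of $A$ plus nondegeneracy of the form) is not by itself valid, since an additive bijection of $R+R$ need not preserve nonvanishing of the symplectic form; but this is harmless, because $w_0=\phi_0(1)\ne 0$ follows from the injectivity of $\phi_0$ once you have derived $\phi_0(1)=w_0$. (Both your argument and the paper's tacitly assume $|R_0|=3$ rather than $R_0=1$; your explicit disclaimer about the associative case is, if anything, slightly more careful than the paper's.)
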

\begin{proof} By lemma \ref{gtr}(ii), the above-constructed group with triality $G=G_{a,b}$ coincides with $\E(M_{a,b})$.
By Lemma \ref{mmini}, it suffices to show that $G_{1,0}$ is not isomorphic to $G_{1,b}$.
Observe that, since $R_0^3=1$
% this is because $M_{1,b}$ is defined for $b\ne 0$,
and  $R$ is a field, we have $|R_0|=3$. By the construction of $G$, we have $W=[G,G]$, $V^*={\rm Z}(W)$,
and $W/V^*\cong V\oplus V$ is the direct sum of three $RT$-homogeneous $2$-dimensional components $Q_i$, $i=1,2,3$,
spanned by the pairs $(e_i,0)$ and $(0,e_i)$,  where $e_i$ is the $i$th basis vector for $V$. By (\ref{pi}) and (\ref{oper2}),
the full preimage $\widehat{Q}_i$ of $Q_i$ in $W$ can be identified with a group with the multiplication
$$
(r_1,r_2,u)(r_1',r_2',u')=(r_1+r_1',r_2+r_2',u+u'+br_1r_2'e_i^*),
$$
where $r_j,r_j'\in R$ and $u,u'\in V^*$. Hence, the groups $\widehat{Q}_i$, $i=1,2,3$, are abelian if
and only if $b=0$. Due to the invariant way these groups were constructed, we conclude that
$G_{1,0}$ and $G_{1,b}$ are not isomorphic.
\end{proof}

\end{document}